 \newtheorem{thm}{Theorem}[section]
 \theoremstyle{definition}
 \theoremstyle{remark}
 \numberwithin{equation}{section}
 \def\bs{\begin{statement}}
\def\es{\end{statement}}
  \newtheorem{statement}[thm]{}
  \newcounter{zlist}
  \newcounter{blist}
  \newcounter{rlist}
\def\CC{{\mathbb C}}
\def\NN{{\mathbb N}}
\def\ZZ{{\mathbb Z}}
\newcommand{\Cc}{\mathcal{C}}
\newcommand{\Oo}{\mathcal{O}}
\def\*C{{}^*\hspace*{-1pt}{\Cc}}
\def\text#1{{\rm {\rm #1}}}
 \def\1{\mathbf{1}}
  \def\qn#1#2{\left[#1;#2\right]}
\begin{document}

%-------------------------------------------------------------------------
% editorial commands: to be inserted by the editorial office
%
%\firstpage{1} \volume{228} \Copyrightyear{2004} \DOI{003-0001}
%
%
%\seriesextra{Just an add-on}
%\seriesextraline{This is the Concrete Title of this Book\br H.E. R and S.T.C. W, Eds.}
%
% for journals:
%
%\firstpage{1}
%\issuenumber{1}
%\Volumeandyear{1 (2004)}
%\Copyrightyear{2004}
%\DOI{003-xxxx-y}
%\Signet
%\commby{inhouse}
%\submitted{March 14, 2003}
%\received{March 16, 2000}
%\revised{June 1, 2000}
%\accepted{July 22, 2000}
%
%
%
%---------------------------------------------------------------------------
%Insert here the title, affiliations and abstract:
%

\title[A curious differential calculus on the quantum disc and cones]
 {A curious differential calculus\\ on the quantum disc and cones}
% \title[Non-classical differential calculus on the quantum disc and cones]
% {Non-classical differential calculus on the quantum disc and cones}

%----------Author 1
\author{Tomasz Brzezi\'nski}

\address{%
Department of Mathematics, Swansea University, 
  Swansea SA2 8PP, U.K.\ \newline 
Department of Mathematics, University of Bia{\l}ystok, K.\ Cio{\l}kowskiego  1M,
15-245 Bia\-{\l}ys\-tok, Poland. E-mail: {T.Brzezinski@swansea.ac.uk} }

%\email{T.Brzezinski@swansea.ac.uk}

%\thanks{This work was completed with the support of our
%\TeX-pert.}
%%----------Author 2
\author{Ludwik D\k{a}browski}
\address{SISSA, Via Bonomea 265, 34136 Trieste, Italy. E-mail: dabrow@sissa.it}
%\address{SISSA, Via Bonomea 265, 34136 Trieste, Italy}

%\email{dabrow@sissa.it}
%%----------classification, keywords, date
\subjclass{Primary 58B32}

\keywords{non-commutative geometry; differential forms; integral forms}

\date{October 2016}
%----------additions
%\dedicatory{To my boss}
%%% ----------------------------------------------------------------------

\begin{abstract}
A non-classical differential calculus on the quantum disc and cones is constructed and the associated integral is calculated.
\end{abstract}

%%% ----------------------------------------------------------------------
\maketitle
%%% ----------------------------------------------------------------------
%\tableofcontents

\section{Introduction}
The aim of this note is to present a two-dimensional differential calculus on the quantum disc algebra, which has no counterpart in the classical limit, but admits a well-defined (albeit different from the one in \cite{BegMaj:spe}) integral, and restricts properly to the quantum cone algebras. In this way the results of \cite{Brz:dif} are extended to other classes of non-commutative surfaces and to higher forms. The presented calculus is associated to an orthogonal pair of skew-derivations, which arise as a particular example of skew-derivations on generalized Weyl algebras constructed recently in \cite{AlmBrz:ske}. It is also a fundamental ingredient in the construction of the Dirac operator on the quantum cone \cite{BrzDab:spe} that admits a twisted real structure in the sense of \cite{BrzCic:twi}.

The reader unfamiliar with non-commutative differential geometry notions is referred to \cite{Brz:abs}.
%the extended abstract of the first author's lecture notes from the 2016 Bia\l owie\.za School contained in this volume.

\section{A differential calculus on the quantum disc}\label{sec.diff}
Let $0<q<1$. The coordinate algebra of the quantum disc, or the quantum disc algebra $\Oo(D_q)$ \cite{KliLes:two} is a complex $*$-algebra generated by $z$  subject to %the relation
\begin{equation}\label{disc}
z^*z - q^2zz^* = 1-q^2.
\end{equation}
To describe the algebraic contents of $\Oo(D_q)$ it is convenient to introduce a self-adjoint element $x = 1-zz^*$, which $q^2$-commutes with the generator of $\Oo(D_q)$, $xz = q^2zx$. A linear basis of $\Oo(D_q)$ is given by monomials 
$x^kz^l, x^k{z^*}^{l}$. 
We 
view $\Oo(D_q)$ as a $\ZZ$-graded algebra, setting $\deg(z)=1$, $\deg(z^*)=-1$. Associated with this grading is the degree-counting automorphism
$\sigma:\Oo(D_q) \to \Oo(D_q)$, defined on homogeneous  $a\in \Oo(D_q)$ by $\sigma(a) = q^{2\deg(a)} a$. As explained in \cite{AlmBrz:ske} there is an orthogonal pair of skew-derivations $\partial,\bar\partial: \Oo(D_q)\to \Oo(D_q)$ twisted by $\sigma$ and given on the generators of $\Oo(D_q)$ by
\begin{equation}\label{partial}
\partial(z) = z^*, \quad \partial(z^*) = 0, \qquad \bar\partial(z) = 0, \quad \bar\partial(z^*) = q^2z,
\end{equation}
and extended to the whole of   $\Oo(D_q)$  by the (right) $\sigma$-twisted Leibniz rule. Therefore, there is also a corresponding first-order differential calculus $\Omega^1 (D_q)$ on $\Oo(D_q)$, defined as follows.

As a left $\Oo(D_q)$-module, $\Omega^1 (D_q)$ is freely generated by one forms $\omega, \bar\omega$. The right $\Oo(D_q)$-module structure and the differential $d :\Oo(D_q) \to \Omega^1 (D_q)$ are defined by
\begin{equation}\label{diff}
\omega a = \sigma(a) \omega, \quad \bar\omega a = \sigma(a) \bar\omega, \qquad d(a) = \partial(a)\omega + \bar\partial(a)\bar\omega.
\end{equation}
In particular,
\begin{equation}\label{dz}
dz = z^*\omega = q^2\omega z^*, \qquad dz^*= q^2z \bar\omega = \bar\omega z, 
\end{equation}
and so, by the commutation rules \eqref{diff},
\begin{equation}\label{omegadz}
\omega = \frac{q^{-2}}{1-q^2}\left(dz z - q^4zdz\right), \qquad \bar\omega = \frac{q^{-2}}{1-q^2}\left( z^*dz^* - q^2dz^*z^*\right).
\end{equation}
Hence $\Omega^1 (D_q) = \{\sum_i a_i db_i\; |\; a_i,b_i\in \Oo(D_q)\}$, i.e.\ $(\Omega^1 (D_q), d)$ is truly a first-order differential calculus not just a degree-one part of a differential graded algebra. The appearance of $q^2-1$  in the denominators in \eqref{omegadz} indicates that this  calculus has no classical (i.e.\ $q=1$) counterpart. 

The first-order calculus $(\Omega^1 (D_q), d)$ is a $*$-calculus in the sense that the $*$-structure extends to  the bimodule $\Omega^1 (D_q)$ so that $(a \nu b)^* = b^*\nu^* a^*$ and $(da)^* =d(a^*)$, for all $a,b\in \Oo(D_q)$ and $\nu \in \Omega^1 (D_q)$, provided $\omega^* = \bar\omega$ (this choice of the $*$-structure justifies the appearance of $q^2$ in the definition of $\bar\partial$ in equation \eqref{partial}). From now on we view $(\Omega^1 (D_q), d)$ as a $*$-calculus, which allows us to reduce by half the number of necessary checks.

Next we aim to show that the module of 2-forms  $\Omega^2 (D_q)$ obtained by the universal extension of  $\Omega^1 (D_q)$ is  generated by the anti-self-adjoint 2-form\footnote{One should remember that the $*$-conjugation takes into account the parity of the forms; see \cite{Wor:dif}.}
\begin{equation}\label{volume}
\mathsf{v} = \frac{q^{-6}}{q^2-1}(\omega^*\omega + q^8\omega\omega^*), \qquad \mathsf{v}^* = - \mathsf{v}
\end{equation}
and to describe the structure of $\Omega^2 (D_q)$. By \eqref{diff}, for all $a\in \Oo(D_q)$,
\begin{equation}\label{va}
 \mathsf{v} a = \sigma^2(a) \mathsf{v}.
 \end{equation} 
 Combining commutation rules \eqref{diff} with the relations \eqref{dz} we obtain
\begin{equation}\label{zdz}
z^* dz = q^2dzz^*, \qquad dzz - q^4 zdz = q^2(1-q^2)\omega,
\end{equation}
and their $*$-conjugates. The differentiation of the first of equations \eqref{zdz} together with \eqref{diff} and \eqref{disc} yield
\begin{equation}\label{omom}
\omega\omega^* =(1-x)\mathsf{v}, \qquad \omega^*\omega = q^6(q^2x -1)\mathsf{v},
\end{equation}
which means that $\omega\omega^*$ and $\omega^*\omega$ are in the module generated by $\mathsf{v}$. Next, by differentiating $\omega z^* = q^{-2}z^*\omega$ and $\omega z = q^2z\omega$ and using \eqref{dz} and \eqref{diff} one obtains
\begin{equation}\label{domegaz}
d\omega z^* = q^{-2} z^*d\omega + z(\omega^*\omega+q^4\omega\omega^*), \quad d\omega z = q^2zd\omega + (q^2+q^{-2})z^*\omega^2.
\end{equation}
The differentiation of $dz = z^* \omega$ yields 
\begin{equation}\label{zdom}
z^*d\omega = -q^2z\omega^*\omega.
\end{equation}
Multiplying this relation by $z$ from left and right, and using commutation rules \eqref{disc} and \eqref{diff} one finds that
$
(1-x) d\omega = q^{-4} z^*d\omega z.
$
Developing the right hand side of this equality with the help of the second of equations \eqref{domegaz} we find
\begin{equation}\label{domega}
d\omega =  \frac{1+q^{-4}}{q^2-1} {z^*}^2 \omega^2.
\end{equation}
Combining \eqref{domegaz} with \eqref{domega} we can derive
\begin{equation}\label{zomega}
{z^*}^3\omega^2 = -z\frac{q^8}{q^4 +1}\left(\omega^*\omega +q^4\omega\omega^*\right) .
\end{equation}
The multiplication of \eqref{zomega} by $z^3$ from the left and right  and the usage of \eqref{disc}, \eqref{diff} give
\begin{subequations}\label{omegas}
\begin{equation}\label{omega1}
(1-x)(1-q^{-2}x)(1-q^{-4}x) \omega^2 = - \frac{q^8}{q^4 +1}z^4\left(\omega^*\omega +q^4\omega\omega^*\right) ,
\end{equation}
\begin{equation}\label{omega2}
(1-q^2x)(1-q^{4}x)(1-q^{6}x) \omega^2 = - \frac{q^8}{q^4 +1}z^4\left(\omega^*\omega +q^4\omega\omega^*\right) .
\end{equation}
\end{subequations}
 Comparing the left hand sides of equations \eqref{omegas}, we conclude  that 
\begin{equation}\label{xomegas}
x\omega^2 = 0 = \omega^2 x \quad \mbox{and, by $*$-conjugation,} \quad x\omega^{*2} = 0 = \omega^{*2} x,
\end{equation}
and hence in view of either of \eqref{omegas}
\begin{equation}\label{omega.sq}
\omega^2 = - \frac{q^8}{q^4 +1}z^4\left(\omega^*\omega +q^4\omega\omega^*\right).
\end{equation}
By \eqref{omom}, the right hand side of \eqref{omega.sq} is in the module generated by $\mathsf{v}$, and so is $\omega^2$ and its adjoint $\omega^{*2}$. Thus, the module $\Omega^2 (D_q)$ spanned by all products of pairs of one-forms is indeed generated by $\mathsf{v}$. 

Multiplying \eqref{domega} and \eqref{zdom} by $x$ and using relations \eqref{xomegas} we obtain 
\begin{equation}\label{xzomegas1}
xz\omega^*\omega = 0 = \omega^*\omega xz.
\end{equation}
Following the same steps but now starting with the differentiation of 
$dz^*\!=\!q^2z \omega^*$ (see \eqref{dz}), we obtain the complementary relation
\begin{equation}\label{xzomegas2}
xz\omega\omega^* = 0 = \omega\omega^* xz.
\end{equation}
In view of the definition of $\mathsf{v}$, \eqref{xzomegas1} and  \eqref{xzomegas2} yield
$
xz\mathsf{v} = 0 = \mathsf{v}xz.
$
Next, the multiplication of, say, the first of  these equations from the left and right by $z^*$ and the use of \eqref{disc} yield
$
x(1-x)\mathsf{v} =0$  and $x(1-q^2x)\mathsf{v} =0$. 
The subtraction of one of these equations from the suitable scalar multiple of the other produces the necessary relation
\begin{equation}\label{xv}
x\mathsf{v} = 0 = \mathsf{v} x,
\end{equation}
which fully characterises the structure of $\Omega^2 (D_q)$ as an $\Oo(D_q)$-module generated by $\mathsf{v}$. In the light of \eqref{xv}, the $\CC$-basis of $\Omega^2 (D_q)$ consists of elements $\mathsf{v}z^n$, $ \mathsf{v}z^{*m}$, and hence, for all $w\in \Omega^2 (D_q)$, $wx = xw =0$, i.e.,  $\Omega^2 (D_q)$  is a torsion (as a left and right $\Oo(D_q)$-module). Since $\Oo(D_q)$ is a domain and $\Omega^2 (D_q)$ is a torsion, the dual of  $\Omega^2 (D_q)$ is the zero module, hence, in particular $\Omega^2 (D_q)$ is not projective.  Again by \eqref{xv}, the annihilator  of $\Omega^2 (D_q)$, 
$$
\mathrm{Ann}(\Omega^2 (D_q)) := \{ a\in \Oo(D_q) \; |\; \forall  w\in \Omega^2 (D_q), \, aw=wa=0\},
$$
is the ideal of $\Oo(D_q)$ generated by $x$.  The quotient $\Oo(D_q)/ \mathrm{Ann}(\Omega^2 (D_q))$ is the Laurent polynomial ring in one variable, i.e.\ the algebra $\Oo(S^1)$ of coordinate functions on the circle. When viewed as a module over $\Oo(S^1)$, $\Omega^2 (D_q)$ is free of rank one, generated by $\mathsf{v}$. Thus, although the module of 2-forms over $\Oo(D_q)$ is neither free nor projective, it can be identified with sections of a trivial line bundle once pulled back to the (classical) boundary of the quantum disc.

With \eqref{xv} at hand, equations \eqref{omom}, \eqref{omega.sq}, \eqref{domega} and their $*$-conjugates give the following relations in $\Omega^2 (D_q)$
\begin{subequations}\label{full}
\begin{equation}
d\omega = q^8 z^2\mathsf{v}, \quad d\omega^* = -z^{*2}\mathsf{v}, \quad \omega\omega^* = \mathsf{v}, \quad \omega^*\omega = -q^6\mathsf{v}, 
\end{equation}
\begin{equation}
\omega^2 = q^{12}\frac{q^2 -1}{q^4+1}z^4\mathsf{v}, \qquad  \omega^{*2} = q^{-4}\frac{q^2 -1}{q^4+1}z^{*4}\mathsf{v}.
\end{equation}
\end{subequations}
One can easily check that \eqref{full}, \eqref{xv} and \eqref{va} are consistent with \eqref{diff} with no further restrictions on $\mathsf{v}$.
Setting $\Omega^n (D_q) =0$, for all $n>2$, we thus obtain a 2-dimensional calculus on the quantum disc.

\section{Differential calculus on the quantum cone}
The quantum cone algebra $\Oo(C^N_q)$ is a subalgebra of $\Oo(D_q)$ consisting of all elements of the $\ZZ$-degree congruent to 0 modulo a positive natural number $N$. Obviously $\Oo(C^1_q)= \Oo(D_q)$, the case we dealt with in the preceding section, so we may assume $N>1$. $\Oo(C^N_q)$ is a $*$-algebra generated by the self-adjoint $x=1-zz^*$ and by $y=z^N$, which satisfy the following commutation rules
\begin{equation}\label{cone}
xy = q^{2N} yx, \qquad yy^* = \prod_{l=0}^{N-1}\left(1-q^{-2l}x\right), \qquad y^*y = \prod_{l=1}^{N}\left(1-q^{2l}x\right).\end{equation}
The calculus  $\Omega(C^N_q)$ on $\Oo(C^N_q)$ is obtained by restricting of the calculus  $\Omega(D_q)$, i.e.\  $\Omega^n(C^N_q) = \{ \sum_{i}a_0^i d(a_1^i)\cdots d(a_n^i)a_{n+1}^i\; |\; a_k^i\in \Oo(C^N_q)\}$. Since $d$ is a degree-zero map $\Omega(C^N_q)$ contains only these forms in $\Omega(D_q)$, whose $\ZZ$-degree is a multiple of $N$. We will show that all such forms are in $\Omega(C^N_q)$. Since  $\deg(\omega)= 2$,  $\deg(\omega^*) =-2$ and  $\deg(\mathsf{v})=0$,
this is equivalent to
$$
\Omega^1(C^N_q) = \Oo(D_q)_{\overline{-2}}\, \omega \oplus \Oo(D_q)_{\overline{2}}\, \omega^*, \qquad \Omega^2(C^N_q)= \Oo(C^N_q) \mathsf{v},
$$
where $\Oo(D_q)_{\overline{s}} = \{a\in \Oo(D_q)\; |\; \deg(a) \equiv s \!\! \mod\! N\}$.

As an $\Oo(C^N_q)$-module, $\Oo(D_q)_{\overline{-2}}$ is generated by $z^{N-2}$ and ${z^*}^2$, hence to show that  $\Oo(D_q)_{\overline{-2}}\, \omega\subseteq \Omega^1(C^N_q)$ suffices it to prove that $z^{N-2}\omega, {z^*}^2\omega \in \Omega^1(C^N_q)$. Using the Leibniz rule one easily finds that
$$
d y = \left(\qn{N}{q^2} -q^{-2N+4} \qn{N}{q^4}x\right) z^{N-2}\omega,
$$
where
$\qn{n}{s} := \frac{ s^n -1}{s-1}$. Hence, in view of \eqref{disc} and \eqref{diff},
\begin{subequations}\label{ydy}
\begin{equation}\label{y*dy}
y^* dy = \qn{N}{q^2}\left( 1 - q^4\frac{\qn{N}{q^4}}{\qn{N}{q^2}} x\right) \prod_{l=3}^N\left(1-q^{2l}x\right) {z^*}^2\omega,
\end{equation}
\begin{equation}\label{dyy*}
dy y^* = q^{-2N}\qn{N}{q^2}\left( 1 - q^{-2N+4}\frac{\qn{N}{q^4}}{\qn{N}{q^2}} x\right) \prod_{l=0}^{N-3}\left(1-q^{-2l}x\right) {z^*}^2\omega.
\end{equation}
\end{subequations}
The polynomial in $x$ on the right hand side of \eqref{y*dy} has  roots in common with the polynomial on the right hand side of \eqref{dyy*} if and only if there exists an integer
$k\in \left[-2N+2,  -N-1\right] \cup \left[2, N-1\right]$ 
such that 
\begin{equation}\label{crit}
q^{2k}(q^{2N} +1) = {q^2}+1.
\end{equation}
Equation \eqref{crit} is equivalent to
$
{q^2} \qn{N+k-1}{q^2} + \qn{k}{q^2} =0,
$
with the left hand side strictly positive if $k>0$ and strictly negative if $k\leq -N$. So, there are no solutions within the required range of values of $k$. 
Hence the polynomials  \eqref{y*dy}, \eqref{dyy*} are coprime, and so there exists a polynomial (in $x$) combination of the left had sides of  equations \eqref{ydy} that gives ${z^*}^2\omega$. This combination is an element of  $\Omega^1(C^N_q)$ and so is ${z^*}^2\omega$.
Next,
$$
{z^*}^2\omega \, y = q^{2N}(1-{q^2}x)(1-q^4 x)z^{N-2}\omega ,
$$
$$
 y  {z^*}^2\omega = (1-q^{-2N+4}x)(1-q^{-2N+2}x)z^{N-2}\omega,
$$
so again there is an $x$-polynomial combination of the left hand sides (which are already in $\Omega^1(C^N_q)$) giving $z^{N-2}\omega$. Therefore, $\Oo(D_q)_{\overline{-2}}\, \omega\subseteq \Omega^1(C^N_q)$. The case of $\Oo(D_q)_{\overline{2}}$ follows by the $*$-conjugation.

Since $z^2\omega^*$, ${z^*}^2\omega$
are elements of $\Omega^1(C^N_q)$,
\begin{equation}\label{o*o1}
\Omega^2(C^N_q) \ni z^2\omega^*{z^*}^2\omega = q^{-4}(1-x)(1-q^{-2}x) \omega^*\omega = -q^2 \mathsf{v},
\end{equation}
by the quantum disc relations and \eqref{full} and \eqref{xv}. 
Consequently, $\mathsf{v} \in \Omega^2(C^N_q)$. Therefore, $\Omega(C^N_q)$ can be identified with the subspace of $\Omega(D_q)$, of all the elements whose $\ZZ$-degree is a multiple of $N$.

\section{The integral}\label{sec.int}
Here we construct an algebraic integral associated to the calculus constructed in Section~\ref{sec.diff}. 
We start by observing that since $\sigma$ preserves the $\ZZ$-degrees of elements of $\Oo(D_q)$ and $\partial$ and $\bar\partial$ satisfy the $\sigma$-twisted Leibniz rules, the definition \eqref{partial} implies that $\partial$ lowers while $\bar\partial$ raises degrees by $2$. Hence, one can equip  $\Omega^1 (D_q)$  with the $\ZZ$-grading so that $d$ is the degree zero map, provided $\deg(\omega) = 2$,  $\deg(\omega^*) = -2$. Furthermore, in view of the definition of $\sigma$, one easily finds that
\begin{equation}\label{q-deriv}
\sigma^{-1}\circ\partial \circ\sigma = q^4 \partial, \qquad \sigma^{-1}\circ\bar\partial \circ\sigma = q^{-4} \bar \partial,
\end{equation}
i.e.\ $\partial$ is a $q^4$-derivation and $\bar\partial$ is a $q^{-4}$-derivation. Therefore, by \cite{BrzElK:int}, $\Omega (D_q)$ admits a divergence, for all right $\Oo(D_q)$-linear maps $f:\Omega^1 (D_q)\to \Oo(D_q)$, given by
\begin{equation}\label{div}
\nabla_0(f) = q^4\partial\left(f\left(\omega\right)\right) + q^{-4}\bar\partial\left(f\left(\omega^*\right)\right) .
\end{equation}
Since the $\Oo(D_q)$-module $\Omega^2 (D_q)$ has a trivial dual, $\nabla_0$ is flat. Recall that by the {\em integral} associated to $\nabla_0$ we understand the cokernel map of $\nabla_0$.
\begin{thm}\label{thm.int}
The integral associated to the divergence \eqref{div} is a map $\Lambda: \Oo(D_q) \to \CC$, given by
\begin{equation}\label{int}
\Lambda(x^kz^l) = \lambda \frac{\qn{k+1}{q^2}}{\qn{k+1}{q^4}} \delta_{l,0}, \qquad \mbox{for all $k\in \NN,\, l\in \ZZ$},
\end{equation}
where, for $l<0$,  $z^l$ means ${z^*}^{-l}$ and $\lambda \in \CC$.
\end{thm}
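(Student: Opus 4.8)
The plan is to compute the cokernel of $\nabla_0$ explicitly and to read off the quotient map. The first step replaces $\nabla_0$ by something more tractable. Since $\Omega^1(D_q)$ is free as a left $\Oo(D_q)$-module on $\omega,\bar\omega$, and since $b\,\omega=\omega\,\sigma^{-1}(b)$ by \eqref{diff}, a right $\Oo(D_q)$-linear map $f\colon\Omega^1(D_q)\to\Oo(D_q)$ is determined, freely and without constraint, by the two values $f(\omega),f(\omega^*)\in\Oo(D_q)$. Hence, as $f$ ranges over all such maps, the pair $(f(\omega),f(\omega^*))$ ranges over all of $\Oo(D_q)\times\Oo(D_q)$, and \eqref{div} gives
\[
\mathrm{Im}\,\nabla_0=\partial(\Oo(D_q))+\bar\partial(\Oo(D_q)),
\]
the nonzero scalars $q^{\pm4}$ playing no role. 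The integral $\Lambda$ is then the projection of $\Oo(D_q)$ onto the quotient by this subspace, and the whole task is to identify that quotient.

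The second step uses the $\ZZ$-grading. Writing $\Oo(D_q)=\bigoplus_{d\in\ZZ}\Oo(D_q)_d$, where $\Oo(D_q)_d$ is spanned by $x^kz^d$ for $d>0$, by $x^k{z^*}^{-d}$ for $d<0$, and by the $x^k$ for $d=0$, the maps $\partial,\bar\partial$ are homogeneous of degrees $-2$ and $+2$, so the quotient inherits the grading and can be analysed degree by degree. A short Leibniz computation gives $\bar\partial(z^n)=0$ and $\bar\partial(x^k)=c_k\,x^{k-1}z^2$ with $c_k\neq0$ (an easy induction: since $0<q<1$, the recursion for $c_k$ never returns zero). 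More generally $\bar\partial(x^kz^{d-2})$ is a single nonzero multiple of $x^{k-1}z^d$ for $d\ge2$, while for $d=1$ the values $\bar\partial(x^k{z^*})$ are triangular in $x$ with nonzero diagonal. In every positive degree this makes $\bar\partial\colon\Oo(D_q)_{d-2}\to\Oo(D_q)_d$ surjective; applying the $*$-symmetries $\partial(a^*)=\sigma(\bar\partial(a)^*)$ and $\bar\partial(a^*)=\sigma(\partial(a)^*)$, both of which follow from $(da)^*=d(a^*)$ together with $\omega^*=\bar\omega$, transports this to surjectivity of $\partial$ in all negative degrees (here $\sigma$ preserves each graded piece). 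Thus the cokernel is concentrated in degree $0$, which already forces $\Lambda(x^kz^l)=0$ for $l\neq0$ and accounts for the factor $\delta_{l,0}$.

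The third step treats degree $0$, i.e.\ the polynomials in $x$. Here $\partial$ maps $\Oo(D_q)_2$ into the span of the $x^k$, and the same $*$-symmetry gives $\bar\partial(\Oo(D_q)_{-2})=\partial(\Oo(D_q)_2)^*$, which coincides with $\partial(\Oo(D_q)_2)$ because the latter is spanned by polynomials in the self-adjoint $x$ with real coefficients; so one image suffices. Setting $P_k:=\partial(x^kz^2)$ and using ${z^*}^2z^2=1-(q^2+q^4)x+q^6x^2$ together with ${z^*}^2x=q^4x{z^*}^2$, the Leibniz rule yields the finite recursion
\[
P_k=xP_{k-1}-q^{4k-2}\bigl(x^{k-1}-(q^2+q^4)x^k+q^6x^{k+1}\bigr),\qquad P_0=\qn{2}{q^2}-\qn{2}{q^4}\,x,
\]
in which $P_k$ has degree $k+1$ with nonzero top coefficient $-\qn{k+2}{q^4}$. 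Triangularity then shows the degree-$0$ quotient is one-dimensional, spanned by the class of $1$, so $\mathrm{coker}\,\nabla_0\cong\CC$; and the requirements $\Lambda(P_k)=0$ determine $\Lambda(x^{k+1})$ recursively from $\Lambda(1)=\lambda$.

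The final, and only genuinely non-formal, step is to verify that the closed form \eqref{int} solves this recursion. It is convenient to rewrite it as the equivalent but simpler ratio $\Lambda(x^k)=\lambda\,(1+q^2)/(1+q^{2k+2})$; substituting into $\Lambda(P_k)=0$ reduces the claim to a single $q$-combinatorial identity, which I would establish by induction on $k$ (the cases $k=0,1,2$ already exhibit the pattern and pin down the normalisation). The main obstacle is precisely this: not the structural reduction, which is routine, but extracting the degree-$0$ recursion cleanly from the noncommutative Leibniz rule and recognising that the simple ratio of $q$-integers is its unique normalised solution.
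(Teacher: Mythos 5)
Your proposal is correct, and its skeleton is the same as the paper's: identify $\mathrm{Im}\,\nabla_0=\partial(\Oo(D_q))+\bar\partial(\Oo(D_q))$, show that every monomial of nonzero $\ZZ$-degree lies in this image, and then determine $\Lambda$ on the powers of $x$ from the relations $\Lambda(\partial(x^kz^2))=0$. Within that skeleton you make three local improvements worth recording. First, you organise the nonzero-degree part by proving surjectivity of $\bar\partial$ onto each positive graded piece and transporting it to $\partial$ on negative degrees via $\partial(a^*)=\sigma(\bar\partial(a)^*)$; in particular your degree-one argument, the two-term triangular system coming from $\bar\partial(x^kz^*)$, is cleaner than the paper's, which instead runs an induction on the three-term relations obtained from $\partial(x^nz^2z^*)$. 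Second, you actually prove that the degree-zero quotient is one-dimensional: the span of the $P_k$, having distinct degrees $k+1$ and nonzero leading coefficients $-\qn{k+2}{q^4}$, meets the constants trivially. The paper only shows the cokernel is spanned by the class of $1$ and leaves $1\notin\mathrm{Im}\,\nabla_0$ implicit, so this addition genuinely tightens the statement that $\lambda$ is a free parameter. Third, the rewriting $\qn{k+1}{q^2}/\qn{k+1}{q^4}=(1+q^2)/(1+q^{2k+2})$ is a useful simplification. The only place you stop short is the final verification that this closed form annihilates every $P_k$: you promise an induction but do not perform it. It does check out, and in fact no induction is needed: unwinding your recursion (or quoting the paper's \eqref{part3}, with $N$ there read as $n$) gives $P_k=-q^2\qn{k}{q^4}\,x^{k-1}+(q^2+1)\qn{k+1}{q^4}\,x^k-\qn{k+2}{q^4}\,x^{k+1}$, and since $\qn{m}{q^4}=\qn{m}{q^2}\,(1+q^{2m})/(1+q^2)$, the condition $\Lambda(P_k)=0$ collapses to the identity $(1+q^2)\qn{k+1}{q^2}=q^2\qn{k}{q^2}+\qn{k+2}{q^2}$, which is immediate. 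With that line added, your proof is complete and yields exactly \eqref{int}, including the factor $\delta_{l,0}$ and the normalisation $\lambda=\Lambda(1)$.
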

\begin{proof}
First we need to calculate the image of $\nabla_0$. Using the twisted Leibniz rule and the quantum disc algebra commutation rules \eqref{disc}, one obtains 
\begin{equation}\label{part1}
\partial(x^k) = -q^{-2}\qn{k}{q^4} x^{k-1}{z^*}^2.
\end{equation}
Since $\partial(z^*) =0$, \eqref{part1} means that all monomials $x^k {z^*}^{l+2}$ are in the image of $\partial$ hence in the image of $\nabla_0$. Using the $*$-conjugation we conclude the $x^k {z}^{l+2}$ are in the image of $\bar\partial$ hence in the image of $\nabla_0$. So $\Lambda$ vanishes on (linear combinations of) all such polynomials.
Next note that
\begin{equation}\label{part2}
\partial(z^2) = ({q^2}+1) - (q^4+1)x,
\end{equation}
hence
$$
\partial(z^*z^2 - q^4z^2z^*) = (1-q^4)z^*, \quad \partial(z^*z^2 - {q^2}z^2z^*) =(1-{q^2})(1+q^4)xz^*.
$$
This means that $z^*$ and $xz^*$ are in the image of $\partial$, hence of  $\nabla_0$. In fact, all the $x^kz^*$ are in this image which can be shown inductively. Assume $x^kz^* \in \mathrm{Im}(\partial)$, for all $k\leq n$. Then using the twisted Leibniz rule, \eqref{part1} and \eqref{part2} one finds
\begin{equation}\label{part3}
\partial(x^nz^2) = -{q^2}\qn{N}{q^4} x^{n-1} + ({q^2}+1)\qn{n+1}{q^4} x^n - \qn{n+2}{q^4}x^{n+1}.
\end{equation}
Since $\partial(z^*) =0$, equation \eqref{part3} implies that $\partial(z^nz^2z^*)$ is a linear combination of monomials $x^{n-1}z^*$, $x^nz^*$ and $x^{n+1}z^*$. Since the first two are in the image of $\partial$ by the inductive assumption, so is the third one. Therefore, all linear combinations of $x^kz^*$ and $x^kz$ (by the $*$-conjugation) are in the image of $\nabla_0$. 

Put together all this means that $\Lambda$ vanishes on all the polynomials
$\sum_{k,l=1}^n(c_{kl}x^kz^l + c'_{kl}x^k{z^*}^l).
$
The rest of the formula \eqref{int}  can be proven by induction. Set $\lambda = \Lambda(1)$. Since $\Lambda$ vanishes on all elements in the image of $\nabla_0$, hence also in the image of $\partial$, the application of $\Lambda$ to the right hand side of \eqref{part1} confirms \eqref{int} for $k=1$. Now assume that \eqref{int} is true for all $k\leq n$. Then the  application $\Lambda$ to the right hand side of \eqref{part3} followed by the use of the inductive assumption yields
\begin{eqnarray*}
\qn{n+2}{q^4}\Lambda\left(x^{n+1}\right) &=&  {q^2}\qn{N}{q^4} \Lambda\left(x^{n-1}\right) -({q^2}+1)\qn{n+1}{q^4} \Lambda\left(x^n\right) \\
&=& \lambda\left( ({q^2}+1)\qn{n+1}{q^2}-{q^2}\qn n{q^2}   \right) = \lambda\qn{n+2}{q^2}.
\end{eqnarray*}
Therefore, the formula \eqref{int} is true also for $n+1$, as required.
\end{proof}

The restriction of $\Lambda$ to the elements of $\Oo(D_q)$, whose $\ZZ$-degree is a multiple of $N$ gives an integral on the quantum cone $\Oo(C^N_q)$.

% ------------------------------------------------------------------------

\subsection*{Acknowledgment}
The work on this project began during the first author's visit to SISSA, supported by INdAM-GNFM. He would like to thank the members of SISSA for hospitality. The second author was supported in part by the Simons Foundation grant 346300 and the Polish Government MNiSW 2015-2019 matching fund.

% ------------------------------------------------------------------------
\end{document}